\documentclass[leqno,11pt]{amsart}

\usepackage[colorlinks,linkcolor=red,citecolor=green,pagebackref,hypertexnames=false]{hyperref}
\usepackage{amsmath,amsthm,amssymb,mathrsfs}
\usepackage[margin=1in]{geometry}
\usepackage[english]{babel}
\usepackage{bookmark}
\usepackage{graphicx}
\usepackage{xcolor}

\numberwithin{equation}{section}

\newcommand{\bbR}{\mathbb{R}}
\DeclareMathOperator\degmap{deg}

\newtheorem{theorem}{Theorem}
\newtheorem{proposition}{Proposition}
\newtheorem{lemma}{Lemma}

\theoremstyle{definition}
\newtheorem{remark}{Remark}

\title{Critical points of point charge potentials along lines}

\author{Gon\c{c}alo Oliveira}
\address{Centro de Análise Matemática, Geometria e Sistemas Dinâmicos, Departamento de Matemática, Instituto Superior Técnico, Universidade de Lisboa, Av. Rovisco Pais 1, 1049-001 Lisbon, Portugal}

\urladdr{\href{https://sites.google.com/view/goncalo-oliveira-math-webpage/home}{sites.google.com/view/goncalo-oliveira-math-webpage/home}}
\email{\href{mailto:galato97@gmail.com}{galato97@gmail.com}}
\date{}

\subjclass[2020]{Primary 31B05; Secondary 41A50, 58K05}
\keywords{point charges, critical points, Haar systems, electrostatics, degree}

\begin{document}
	\raggedbottom
	
	\begin{abstract}
		We prove Conjecture 1.9 of Gabrielov--Novikov--Shapiro. More precisely, for every $p>0$, every nonconstant restriction to a line of a potential generated by $n$ point charges has at most $2n-1$ critical points. The bound is sharp, even for positive charges. The proof uses a duality with an auxiliary planar potential generated by collinear charges and elementary degree theory.
	\end{abstract}
	
	\maketitle
	
	\section{Introduction}
	
	\subsection{Context}
	
	The problem of estimating the number of equilibrium points created by finitely many point charges goes back at least to Maxwell \cite{Maxwell} and Morse \cite{MorseCairns}. Given points $A_1,\ldots,A_n\in\bbR^3$ and real numbers $\zeta_1,\ldots,\zeta_n$, consider the family of potentials
	\begin{equation}\label{eq:potential-intro}
		V_p(X)=\sum_{j=1}^n \frac{\zeta_j}{|X-A_j|^p},\qquad p>0.
	\end{equation}
	The usual Coulomb potential corresponds to $p=1$. The classical problem is to bound the number of zeroes of $\nabla V_p$ in terms of the number of charges, and even in this case the expected general bound remains unknown. See \cite{GNS,EFO} and the references therein. Very recently, Arathoon--Ball--Kvalheim \cite{ABK} disproved the classical Maxwell bound $(n-1)^2$ by constructing five point charges with at least $24$ nondegenerate equilibria. 
	
	A natural simpler problem is that of restricting the potential to a straight line. Let $L\subset\bbR^3$ be such a line and choose an affine coordinate $x$ on $L$. Let $a_j$ be the coordinate of the orthogonal projection of $A_j$ onto $L$ and $b_j$ that of the orthogonal projection onto $L^\perp$. Then
	\begin{equation}\label{eq:restriction-intro}
		V_p|_L(x)=\sum_{j=1}^n \frac{\zeta_j}{\big((x-a_j)^2+b_j^2\big)^{p/2}},
	\end{equation}
	which is the potential of the one-dimensional problem considered by Gabrielov--Novikov--Shapiro in Conjecture 1.9 of \cite{GNS}. It asserts that, for $p\geq1$, the function in \eqref{eq:restriction-intro} has at most $2n-1$ critical points. This conjecture was recently recalled in \cite{EFO}, but the point of Conjecture 1.9 in \cite{GNS} is that the upper bound $2n-1$ does not depend on $p$.
	
	After the initial submission of this manuscript to arXiv, but before it became public, I learned of independent work of Duan \cite{Duan}, submitted a few days earlier, which also proves Conjecture 1.9. The present version has been revised to acknowledge and compare with that work.
	Duan proves a more general projective paired Haar theorem using Gegenbauer and Wronskian calculations together with methods from the theory of locally convex projective curves. The proof given here takes a comparatively shorter route specifically directed to the point-charge problem. It transforms our one-dimensional problem into a two-dimensional one for a potential generated by collinear charges. A local analysis of its critical points, followed by perturbing to the non-degenerate case, allows us to count the critical points from simple (local and global) degree arguments. The two approaches share some local analysis, but their global mechanisms are different.

	\subsection{Main result}
	
	Our main result, Theorem \ref{thm:main} proves Conjecture 1.9 of \cite{GNS}. It is however, interesting to note that the ambient dimension plays no role after restricting the potential to a line. Hence, the same statement holds for any such potential in $\bbR^d$, with $d\geq2$.
	
	There are two minor points which should be kept in mind from the beginning. First, if two sources have the same projection $a_j$ and the same distance $|b_j|$ from $L$, then they induce exactly the same denominator and their charge strengths can be added. Secondly, if a source lies on $L$, then $b_j=0$ and the restriction is singular at $x=a_j$. By a critical point of the restriction we shall always mean a critical point in its domain.
	
	\begin{theorem}\label{thm:main}
		Let $p>0$ and let $V_p$ be the potential in \eqref{eq:potential-intro}. Fix an affine line $L\subset\bbR^3$ and combine all charges which induce the same pair $(a_j,|b_j|)$ in \eqref{eq:restriction-intro} by adding their coefficients, and omit those for which the resulting coefficient vanishes. Suppose that $N$ nonzero charges remain. If $N\geq1$, then the restriction $V_p|_L$ has at most
		\[
		2N-1\leq 2n-1
		\]
		distinct critical points in its domain. If $N=0$, then $V_p|_L\equiv0$.
	\end{theorem}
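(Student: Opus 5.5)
Set $f(x)=\sum_{j=1}^N \zeta_j\big((x-a_j)^2+b_j^2\big)^{-p/2}$ with $b_j\ge 0$ and distinct pairs $(a_j,b_j)$. Following the abstract's hint, I would read $f$ as the restriction to the real axis of a planar potential. Put $z=(x,y)\in\bbR^2$ and
\[
U(x,y)=\sum_{j=1}^N \zeta_j\,\big((x-a_j)^2+y^2\big)^{-p/2},
\]
which is generated by the collinear charges $\zeta_j$ at the points $(a_j,0)$. The heights $b_j$ have to be encoded somehow. One option is to view each term $((x-a_j)^2+b_j^2)^{-p/2}$ as the value of the $j$-th summand at the point $(x,b_j)$. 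The more symmetric option is a duality: exchange the roles of the evaluation point and the sources, putting the charges at $(a_j,b_j)\in\bbR^2$ and evaluating along the line $y=0$. With this, $f$ is the restriction of a planar point-charge potential $W(x,y)=\sum_j\zeta_j|(x,y)-(a_j,b_j)|^{-p}$ to the horizontal axis.

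Critical points of $f$ are zeros of $\partial_xW$ on $\{y=0\}$. By reflection symmetry I would replace $W$ by its even-in-$y$ symmetrisation, adding mirror charges at $(a_j,-b_j)$; then $\partial_yW=0$ on the axis. Hence critical points of $f$ are exactly the critical points of the symmetrised planar field $\nabla W$ lying on the axis. The dual/collinear picture should appear after taking the $2n$ charges $(a_j,\pm b_j)$: their configuration is symmetric about the axis.

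The counting step would be a degree argument. First, perturb the charges and exponents slightly so that all critical points of $f$ are nondegenerate. Critical points of $f$ are stable under small perturbation in number: one cannot lose local maxima or minima, and degenerate ones can be split without decreasing the count. So it suffices to bound the number of nondegenerate critical points. At a nondegenerate critical point $x_0$ of $f$, the index of $\nabla W$ at $(x_0,0)$ is $\pm1$. It is determined by the sign of $f''(x_0)$ together with the sign of $\partial_y^2W(x_0,0)$.

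The global degree comes from a large circle. Near infinity $W\sim(\sum\zeta_j)|z|^{-p}$, and near each charge $W$ behaves like $\zeta_j|z-A_j|^{-p}$. So the sum of indices of $\nabla W$ over the region outside small discs around the charges is $1-(\text{number of charges})$, or $1-2N$ for the symmetric configuration with a suitable adjustment for charges on the axis. That alone does not bound the number of axis zeros. The key extra input is local: I would show that all zeros of $\nabla W$ on the axis have the \emph{same} index sign, namely $-1$ (saddle type), plus possibly a controlled number of exceptions. I would establish this by using that $\partial_y^2W$ and $\partial_x^2W$ relate through $\Delta |z|^{-p}=p(p-d+2)|z|^{-p-2}$-type identities. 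Off-axis zeros come in mirror pairs with equal index. If all axis zeros had index $-1$ and off-axis pairs contributed nonnegatively, the count would be at most $2N-1$.

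The main obstacle is exactly this index-sign claim. It is false in general: for instance, a minimum of $f$ can be a planar local minimum of $W$. So the plan must be refined. I would try the Haar/Chebyshev route instead: show that the functions $\{((x-a)^2+b^2)^{-p/2}\}$ restricted to $\bbR$ are such that any combination of $N$ of them, differentiated, has at most $2N-1$ zeros. The planar picture would serve to prove this via induction on $N$: remove one charge by dividing by a positive factor and differentiating, à la Rolle, with the degree count controlling the intermediate step. I expect the induction step to be where the real difficulty lies. There I would analyse, via the winding of the vector field $\nabla W$ along the axis, how many sign changes of $\partial_x W$ can be created when a charge is added.
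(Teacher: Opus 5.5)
\textbf{There is a genuine gap: the proposal never actually produces the bound.}

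\emph{The degree route.} Your first route fails exactly where you say it does. What you call a duality (sources at $(a_j,\pm b_j)$, evaluation on the axis) is just the original restriction drawn in a plane. There the sources sit off the axis, and $\Delta|z-A_j|^{-p}=p^2|z-A_j|^{-p-2}$. So $W$ satisfies no equation forcing its critical points to be saddles. Axis critical points can be planar extrema of index $+1$; the centre of a square of equal positive charges is an example. The global degree $1-2N$ then bounds nothing.

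\emph{The induction route.} The fallback (Rolle-type induction on $N$, with the degree count ``controlling the intermediate step'') is only announced. The inductive step, which you yourself flag as the real difficulty, is the entire content of the theorem.

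\emph{The perturbation step.} This is also unjustified. A zero of $f'$ of even multiplicity can disappear under perturbation, and nothing guarantees that the finitely many available parameters split it rather than destroy it.

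\textbf{The missing idea.} The duality has to go the other way: the zeros become the sources. The function space must also be enlarged.

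\emph{Setting up the dual potential.} The paper proves that every nonzero combination of the $2N$ functions $R_j^{-(p+2)/2}$ and $(x-a_j)R_j^{-(p+2)/2}$ has at most $2N-1$ zeros in $D$. Your $f'$ uses only the second family; the first family is needed for the argument. Suppose such a combination vanished at $2N$ distinct points $x_r$. Then the evaluation matrix $C$ is singular, and a nonzero $q\in\ker C^T$ defines charges $q_r$ placed at the \emph{zeros} $(x_r,0)$. The two kernel equations for the $j$-th pair of columns say exactly this:
\begin{itemize}
\item if $b_j>0$, then $U_a=U_b=0$ at $(a_j,\pm b_j)$;
\item if $b_j=0$, then $(a_j,0)$ is a critical point with $D^2U(a_j,0)=0$.
\end{itemize}

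\emph{Why every critical point is now a saddle.} The dual charges are collinear, so $U$ satisfies $U_{aa}+U_{bb}+\frac{p}{b}U_b=0$. Off the axis this makes the Hessian traceless. On the axis it gives $U_{ab}=0$ and $U_{aa}+(p+1)U_{bb}=0$. Hence every nondegenerate critical point is a saddle. This is precisely the ``all indices $-1$'' property you wanted; it holds for the dual potential, not for $W$.

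\emph{The count.} A critical point where $U-U(z_0)$ vanishes to order $k$ has local degree $1-k$. Perturb the $m\le 2N$ dual charges generically, keeping them nonzero with nonzero total. The perturbed potential then has at least $2N$ saddles, while the global degree count gives exactly $m-1\le 2N-1$, a contradiction. This also removes your multiplicity problem. Only distinct zeros enter the evaluation matrix, and the perturbation is applied to the dual charges, where the local degree $1-k$ controls the count.
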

	
	As we shall explain below. This main theorem is actually a Corollary of the following result.
	
	\begin{theorem}\label{thm:Haar}
		Let $p>0$, $\{(a_j,|b_j|)\}_{j=1,\ldots,N} \subset \bbR\times[0,+\infty)$ pairwise distinct, and set
		\[
		R_j(x)=(x-a_j)^2+b_j^2.
		\]
		Then, any nonzero linear combination of the $2N$ functions
		\begin{equation}\label{eq:Haar-family}
			R_j(x)^{-(p+2)/2},\qquad
			(x-a_j)R_j(x)^{-(p+2)/2},\qquad j=1,\ldots,N,
		\end{equation}
		has at most $2N-1$ distinct zeroes in $D=\bbR\backslash\{a_j \ | \ b_j=0\}$. In particular, when all $b_j>0$, the functions in \eqref{eq:Haar-family} form a Haar system on $\bbR$.
	\end{theorem}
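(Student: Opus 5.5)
The plan is to reformulate the zero-counting problem, by linear duality, as a statement about critical points of a planar potential whose charges all lie on one line, and then to bound those critical points by a local Hessian analysis combined with a degree count.

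\textbf{Step 1 (duality).} Put $q=(p+2)/2$ and $P_j=(a_j,b_j)$ in the closed upper half plane, and write a general element of the span of \eqref{eq:Haar-family} as
\[
f(x)=\sum_{j=1}^N\big(\alpha_j+\beta_j(x-a_j)\big)R_j(x)^{-q}.
\]
For a charge sitting at $(x,0)$ on the horizontal axis we have $\nabla_P|P-(x,0)|^{-p}\big|_{P=P_j}=-p\,(a_j-x,b_j)R_j(x)^{-q}$. Hence, when $b_j>0$,
\[
\big(\alpha_j+\beta_j(x-a_j)\big)R_j^{-q}=-\tfrac1p\big\langle v_j,\nabla_P|P-(x,0)|^{-p}(P_j)\big\rangle,\qquad v_j=(-\beta_j,\alpha_j/b_j).
\]
Suppose $f\neq0$ vanishes at $2N$ distinct points $x_1,\dots,x_{2N}\in D$. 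Consider the linear map
\[
\lambda\in\bbR^{2N}\longmapsto\big(\nabla W_\lambda(P_j)\big)_{j=1}^N\in\bbR^{2N},\qquad W_\lambda(P)=\sum_k\lambda_k|P-(x_k,0)|^{-p}.
\]
The nonzero vector $(v_j)_j$ annihilates its image. So the map is singular, and some nonzero $W_\lambda$, generated by $2N$ collinear charges on the axis, has critical points at all of $P_1,\dots,P_N$. The theorem therefore reduces to the following dual claim.

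\emph{Dual claim.} A nonzero potential generated by $2N$ charges on a line in $\bbR^2$ cannot have $N$ prescribed critical points at the distinct points $P_j$ of the closed upper half plane.

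Points with $b_j=0$ need separate treatment. There the $y$-component of the gradient vanishes identically, and the term $\alpha_jR_j^{-q}$ is not a directional derivative. I would handle this by letting $b_j\downarrow0$, with $\alpha_j/b_j$ kept as the $\partial_y^2$-coefficient; the condition then becomes $\partial_xW_\lambda=\partial_y^2W_\lambda=0$ at a point of the axis. This degenerate condition should be counted with multiplicity $2$ in the degree count of Step 3.

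\textbf{Step 2 (local analysis).} The potential $W_\lambda$ is symmetric under $y\mapsto -y$, and $\partial_yW_\lambda=-py\sum_k\lambda_k r_k^{-(p+2)}$. At an off-axis critical point we therefore have $\sum_k\lambda_k r_k^{-p-2}=0$. Plugging this into the Hessian, I expect to show that every off-axis critical point has Hessian determinant $\le0$, so each nondegenerate one is a saddle of index $-1$. At an axis critical point, the Hessian is diagonal with $\partial_y^2W=-p\sum_k\lambda_k r_k^{-p-2}$, and I would compare its sign with that of $\partial_x^2$.

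\textbf{Step 3 (perturbation and global degree).} After a small perturbation of $\lambda$ and the $x_k$ that keeps the prescribed critical points, we may assume all critical points are nondegenerate. Then apply Poincar\'e--Hopf to $\nabla W_\lambda$ on a large disc with small discs around the $2N$ charges removed. Near each charge the field has index $+1$, and the winding at infinity is controlled by the leading asymptotics of $W_\lambda$. Combining this with the reflection symmetry should give at most $N-1$ critical points in the open upper half plane plus a bounded number of axis critical points of index $-1$ in the $y$-direction. This leaves too few to accommodate $N$ prescribed points, which is the desired contradiction.

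\textbf{Main obstacle.} I expect the hardest part to be Step 3. Specifically, the exact index bookkeeping at infinity must be uniform in $p>0$, and it must also handle the case where the total charge $\sum\lambda_k$ vanishes, which changes the decay rate. In addition, the $b_j=0$ limit has to be matched correctly with double-multiplicity axis critical points.
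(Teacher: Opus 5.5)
Your Step 1 is the paper's duality: the map $\lambda\mapsto(\nabla W_\lambda(P_j))_j$ is, up to rescaling rows, the transpose of the evaluation matrix. Your Poincar\'e--Hopf count (degree $1$ at infinity, $-1$ around each charge) is the paper's global count. The gap is in linking the prescribed critical points to that count.

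You propose a perturbation of $\lambda$ and the $x_k$ that \emph{keeps} the critical points at the $P_j$ while making every critical point nondegenerate. This is impossible when some $b_j=0$. There the annihilation conditions say $\partial_xW_\lambda(a_j,0)=0$ and $\sum_k\lambda_k|x_k-a_j|^{-p-2}=0$, i.e.\ $\partial_y^2W_\lambda(a_j,0)=0$. The axis relation $W_{xx}+(p+1)W_{yy}=0$ and evenness ($W_{xy}=0$) then force the whole Hessian to vanish. So that prescribed point is degenerate by construction.

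For $b_j>0$ the step is unjustified. The prescribed points may themselves be degenerate. If you vary only the charges, the constrained family may be just the line $\bbR\lambda$, and you give no transversality argument for moving the $x_k$. For the same reason you cannot guarantee $\sum_k\lambda_k\neq0$ inside that family, which is the zero-total-charge issue you leave open. Your Step 2 classifies only nondegenerate critical points. It gives no control over a degenerate one, which a priori could have local degree $0$ and vanish under perturbation.

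The missing ingredient is a local degree formula valid at \emph{every} critical point $z_0$. Let $k\geq2$ be the order of vanishing of $W_\lambda-W_\lambda(z_0)$. Off the axis, the leading homogeneous term $P_k$ is harmonic. On the axis, $P_k$ is an even homogeneous solution of $P_{aa}+P_{bb}+\frac{p}{b}P_b=0$. That space is one-dimensional, and $\nabla P_k$ does not vanish away from the origin. Deforming $p\to0$ to $\operatorname{Re}(a+ib)^k$ shows the local degree of $\nabla W_\lambda$ at $z_0$ is $1-k\leq-1$.

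This degree is nonzero, and every nondegenerate critical point is a saddle of degree $-1$. Hence you can perturb the charges \emph{freely}, with no need to preserve the $P_j$, to a Morse potential with all charges nonzero and nonzero total charge. This works by Sard's theorem applied to the incidence manifolds $\{\nabla U_q=0\}$ off the axis and $\{(U_q)_a(a,0)=0\}$ on it. Near each critical point there are then exactly $k-1$ saddles. Each off-axis profile gives at least one near $(a_j,b_j)$ and one near its mirror image. Each axis profile, where the Hessian vanishes so $k\geq3$, gives at least two.

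That is at least $2N$ critical points against exactly $m-1\leq2N-1$ from your global count, where $m$ is the number of nonzero $\lambda_k$. This also makes your $b_j\downarrow0$ limit unnecessary, since the axis conditions follow directly as above. With nonzero total charge, the asymptotics $W\sim Q\rho^{-p}$ hold for every $p>0$, so uniformity in $p$ is not an issue.
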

	
	To see why Theorem \ref{thm:Haar} implies Theorem \ref{thm:main}, start by combining charges with equal profiles. This leaves us with $N\geq1$ (the case $N=0$ is immediate) charges whose strengths we will still denote by $\zeta_j\neq0$. Thus
	\[
	V_p|_L(x)=\sum_{j=1}^N \zeta_jR_j(x)^{-p/2},
	\]
	and
	\begin{equation}\label{eq:restricted-derivative-intro}
		(V_p|_L)'(x)
		=-p\sum_{j=1}^N \zeta_j(x-a_j)R_j(x)^{-(p+2)/2}.
	\end{equation}
	This is a nonzero linear combination of the second function in each pair in \eqref{eq:Haar-family}. Hence Theorem \ref{thm:Haar} gives at most $2N-1$ zeroes of the derivative in $D$, which are precisely the critical points of the restriction. Thus we only need to prove Theorem \ref{thm:Haar}.
	
	The proof is by contradiction. Suppose that a nonzero linear combination of the functions in \eqref{eq:Haar-family} vanishes at $2N$ distinct points $x_1,\ldots,x_{2N}\in D$. Then, their values at these points can be used to construct a singular $2N\times2N$ matrix $C$, whose $j$-th pair of columns is obtained by evaluating the functions in \eqref{eq:Haar-family} at the points $x_1,\ldots,x_{2N}\in D$.
	Because it consists in evaluating the above functions at these points, we shall refer to $C$ as the evaluation matrix.
	
	Because $C$ is singular, there is a nonzero vector $q=(q_1,\ldots,q_{2N})\in\ker C^T$. Omitting its zero entries and regarding the remaining $q_r$ as signed charges at the collinear points $(x_r,0)$ we can consider the planar potential
	\begin{equation}\label{eq:aux-intro}
		U(a,b)=\sum_r \frac{q_r}{\big((a-x_r)^2+b^2\big)^{p/2}}.
	\end{equation}
	The equations $C^Tq=0$ imply that, when $b_j \neq 0$, both $(a_j,\pm b_j)$ are critical points of $U$. On the other hand, when $b_j=0$, they imply that $(a_j,0)$ is a critical point and that its Hessian vanishes. This is also where both functions in each pair in \eqref{eq:Haar-family} enter the proof. After passing to $\ker C^T$, the second columns give $U_a=0$, while the first give $U_b=0$ away from the axis and the additional degeneracy on the axis.
	
	It remains to understand what happens to these prescribed critical points under a small perturbation of the auxiliary charge strengths. The function $U$ satisfies
	\begin{equation}\label{eq:weighted-intro}
		U_{aa}+U_{bb}+\frac{p}{b}U_b=0.
	\end{equation}
	Section \ref{sec:local} shows that every critical point is isolated and that, if the first nonconstant homogeneous term at such a point has degree $k\geq2$, then every sufficiently small perturbation for which the nearby critical points are nondegenerate has exactly $k-1$ such critical points, all saddles. Thus an arbitrary critical point gives at least one saddle, while one with vanishing Hessian gives at least two.
	
	Section \ref{sec:morse} shows that the charge strengths can be perturbed so that all critical points are nondegenerate and the total charge is nonzero. If the number of nonzero charges $q_r$ is $m\leq 2N$, this perturbation can be made still with $m$ nonzero charges and the resulting (non-degenerate) potential has exactly $m-1$ critical points, as can be proven by degree arguments (or Morse theory). Our perturbation argument will show that the $N$ profiles $\{(a_j,b_j)\}_{j=1,\ldots , N}$ give rise to at least $2N$ critical points (two from each pair of profiles with $\pm b_j\neq 0$, and at least two from each profile with $b_j=0$). Hence
	\[
	2N\leq m-1,
	\]
	which is impossible as $m \leq 2N$.

	\begin{remark}[The bound is sharp]\label{rem:sharp} The estimate in Theorem \ref{thm:main} cannot be improved in this generality. This is true, even for equal positive charges and for a line disjoint from them. For $N=1$ this statement is immediate. For $N\geq2$, choose $a_1<\cdots<a_N$ and consider
		\[
		V_\epsilon(x)=\sum_{j=1}^N\frac{1}{\big((x-a_j)^2+\epsilon^2\big)^{p/2}}.
		\]
		This is the restriction to the $x$-axis of the potential generated by equal positive charges at $(a_j,\epsilon,0)$. The contribution of the $j$-th charge to $V_\epsilon'$ at $a_j\pm\epsilon$ is $\mp p\,2^{-(p+2)/2}\epsilon^{-p-1}$, while the remaining terms stay bounded as $\epsilon\to0$. Hence, for $\epsilon$ sufficiently small,
		\[
		V_\epsilon'(a_j-\epsilon)>0,
		\qquad
		V_\epsilon'(a_j+\epsilon)<0.
		\]
		Taking $\epsilon$ smaller than half the minimum distance between the $a_j$, there is one zero of $V_\epsilon'$ in each interval $(a_j-\epsilon,a_j+\epsilon)$ and one in each interval $(a_j+\epsilon,a_{j+1}-\epsilon)$. These give $2N-1$ distinct critical points, and Theorem \ref{thm:main} shows that there can be no others.
	\end{remark}
	
	\begin{remark}[Purely one-dimensional proof?] It would be interesting to obtain this bound from a purely one-dimensional proof working with $V_p|_L$ rather than through the duality with the two-dimensional problem.
	\end{remark}
	
	\subsection*{Acknowledgments}
	
	I thank Boris Shapiro and Dmitry Novikov for useful comments. I am particularly grateful to Boris Shapiro for drawing my attention to the recent independent work of Xiuqing Duan, and to Dmitry Novikov for his comments that led to the discussion preceding Proposition \ref{prop:morse-count}. I also thank Chris Fillmore and Herbert Edelsbrunner for countless conversations on the topic. The author acknowledges funding from FCT projects 10.54499/UID/04459/2025, 10.54499/UID/PRR/04459/2025.
	
	\subsection*{Tool and computational resource disclosure}
	
	Artificial intelligence was used extensively in the development of this article. In the spirit of the Leiden Declaration on Artificial Intelligence and Mathematics \cite{LeidenDeclaration}, I record its role here.
	
	The project arose during an extended interaction with ChatGPT which initially concerned a different problem. Its use was mathematically substantive, and several parts of the strategy and proof were developed through interaction with the system. The arguments were subsequently modified, simplified, and checked by me. ChatGPT was also used to help reorganize and edit the exposition and the \LaTeX\ source.
	
	I take full responsibility for the correctness of the results and proofs.

	\section{Local analysis of the critical points}\label{sec:local}
	
	Fix $p>0$, distinct real numbers $x_1,\ldots,x_m$, and nonzero real coefficients $q_1,\ldots,q_m$. In this and the next section we will be considering
	\begin{equation}\label{eq:U}
		U(a,b)=\sum_{r=1}^m \frac{q_r}{\big((a-x_r)^2+b^2\big)^{p/2}}
	\end{equation}
	on
	\[
	\Omega=\bbR^2\backslash\{(x_1,0),\ldots,(x_m,0)\}.
	\]
	The function $U$ is real analytic on $\Omega$ and even in $b$. For each term in the sum, a direct computation 
	\(
	\left(\partial_a^2+\partial_b^2+\frac{p}{b}\partial_b\right)
	\big((a-x)^2+b^2\big)^{-p/2}=0,
	\)
	for $b \neq 0$. Linearity then gives 
	\begin{equation}\label{eq:weighted}
		U_{aa}+U_{bb}+\frac{p}{b}U_b=0
	\end{equation}
	away from the $a$-axis, i.e. $\{b =0\}$. However, as $U$ is even in $b$, $U_b$ is odd and thus $U_b/b$ extends real analytically to all of $\Omega$, and equation \eqref{eq:weighted} extends as well, with the last term interpreted by this extension.\footnote{
		Equation \eqref{eq:weighted} is the generalized axially symmetric potential equation of Weinstein \cite{Weinstein}. When $p$ is a nonnegative integer, it is the ordinary Laplace equation on $\bbR^{p+2}$ restricted to functions invariant under rotations in the last $p+1$ variables.
	}
	
	At an off-axis critical point the first nonconstant homogeneous term of the Taylor expansion will be an ordinary harmonic polynomial. On the axis it satisfies the corresponding generalized axially symmetric equation. Our first Lemma below (Lemma \ref{lem:homogeneous}) gives the local model at points on the axis and computes the degree of its gradient. Recall that, if $z_0$ is an isolated critical point of a function $f$ and $B$ is a sufficiently small disk centered at $z_0$, its local degree at $z_0$ is defined by
	\[
	\degmap\left(\frac{\nabla f}{|\nabla f|}:\partial B\longrightarrow S^1\right).
	\]
	This quantity is usually called the index of the vector field $\nabla f$ at $z_0$, but we will avoid this terminology in order avoid confusion with the Morse index of $f$. In fact, if $z_0$ is nondegenerate, its Morse index $\lambda$ is related to this degree through
	\[
	\degmap\left(\frac{\nabla f}{|\nabla f|} \right)= \operatorname{sign}\det D^2f(z_0)=(-1)^\lambda.
	\]
	In particular, a saddle has local degree $-1$. See, for example, \cite[\S\S5--6]{Milnor}. This latest observation will be important in the proof of Proposition \ref{prop:local-perturbation} when we combine it with our local analysis to study what happens when $U$ is perturbed to a Morse function.
	
	\begin{lemma}\label{lem:homogeneous}
		Let $k\geq1$ and $p\geq0$. The space of homogeneous polynomials $P(a,b)$ of degree $k$, even in $b$, and satisfying \eqref{eq:weighted}, i.e. $P_{aa}+P_{bb}+\frac{p}{b}P_b=0$, is one-dimensional. If $P_{k,p}$ is normalized so that the coefficient of $a^k$ is $1$, then $\nabla P_{k,p}$ does not vanish on $\bbR^2\backslash\{0\}$ and, on the positively oriented unit circle,
		\begin{equation}\label{eq:degree-homogeneous}
			\degmap\left(\frac{\nabla P_{k,p}}{|\nabla P_{k,p}|}\right)=1-k.
		\end{equation}
	\end{lemma}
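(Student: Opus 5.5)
Write $P=\sum_{i} c_i a^{k-2i}b^{2i}$, $0\le 2i\le k$; this is the general homogeneous degree-$k$ polynomial even in $b$. Using $b^{-1}\partial_b(b^{2i})=2i\,b^{2i-2}$, the operator $\partial_a^2+\partial_b^2+\frac{p}{b}\partial_b$ sends $a^{k-2i}b^{2i}$ to
\[
(k-2i)(k-2i-1)a^{k-2i-2}b^{2i}+2i(2i-1+p)a^{k-2i}b^{2i-2}.
\]
Collecting the coefficient of $a^{k-2i}b^{2i-2}$ gives the two-term recursion
\[
2i(2i-1+p)\,c_i=-(k-2i+2)(k-2i+1)\,c_{i-1},\qquad i\ge1 .
\]
Since $p\ge0$, we have $2i(2i-1+p)>0$ for $i\geq1$, so $c_0$ determines every $c_i$. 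This proves the space is one-dimensional, spanned by $P_{k,p}$ with $c_0=1$. The recursion also shows the signs of the $c_i$ alternate. For $p$ a nonnegative integer, $P_{k,p}$ is, up to a constant, $r^kC^{(p+1)/2}_k(a/r)$ with $r^2=a^2+b^2$, the axially symmetric harmonic polynomial on $\bbR^{p+2}$.

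For the nonvanishing of $\nabla P$ and the degree, pass to polar coordinates $a=r\cos\theta$, $b=r\sin\theta$ and write $P=r^kf(\theta)$. The equation becomes the ODE
\[
f''+p\cot\theta\,f'+k(k+p)f=0,
\]
the Gegenbauer-type equation. Here $f$ is even in $\theta$ and $f(\theta+\pi)=(-1)^kf(\theta)$. On the circle,
\[
\nabla P=r^{k-1}\bigl(kf\,e_r+f'\,e_\theta\bigr).
\]
So $\nabla P=0$ at some $\theta$ forces $f(\theta)=f'(\theta)=0$. Away from $\theta\in\{0,\pi\}$ the ODE is regular, so uniqueness gives $f\equiv0$, which is impossible. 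At $\theta=0$ we have $f(0)=c_0=1\neq0$, and at $\theta=\pi$ we have $f(\pi)=\pm1$. Hence $\nabla P\neq0$ on $\bbR^2\setminus\{0\}$.

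For the degree, the angle of $\nabla P$ equals $\theta+\phi(\theta)$, where $\phi$ is the argument of $(kf,f')$ in the rotating frame $(e_r,e_\theta)$. The degree is therefore $1+\frac{1}{2\pi}\Delta\phi$. Where $f'\ne0$, the function $\psi=\arg(kf+if')$ satisfies
\[
\psi'=\frac{k(f'^2-ff'')}{k^2f^2+f'^2}.
\]
At a zero of $f$, $\psi'=k>0$, so $(kf,f')$ turns counterclockwise through those points. I would instead count zeros. The vector $(kf,f')$ turns monotonically with respect to crossings of the axis $f=0$, so $\Delta\phi$ is determined by the number $Z$ of zeros of $f$ on $[0,2\pi)$, with $\Delta\phi=\mp\pi Z$ depending on orientation. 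The sign needs care: in the Cartesian-model case $p=0$ with $P=\mathrm{Re}(a+ib)^k$, the gradient is $\overline{k z^{k-1}}$, which has degree $1-k$. This fixes the orientation as $\Delta\phi=-\pi Z$.

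It then suffices to show $f$ has exactly $2k$ simple zeros on the circle, that is, $k$ zeros in $(0,\pi)$. The ODE on $(0,\pi)$ is a Sturm–Liouville problem, $(\sin^p\theta\,f')'+k(k+p)\sin^p\theta\,f=0$. The function $f$ is its $k$-th eigenfunction, regular at both endpoints with eigenvalue $k(k+p)$ for index $k$, so Sturm oscillation gives exactly $k$ zeros, all simple. Equivalently, $f(\theta)=C^{(p+1)/2}_k(\cos\theta)$ and Gegenbauer polynomials have $k$ simple roots in $(-1,1)$, which holds for all real $p\ge0$ by orthogonality with respect to the positive weight $(1-t^2)^{p/2}$. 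Then $\Delta\phi=-2k\pi$ and the degree is $1-k$.

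The main obstacle is the careful orientation bookkeeping linking the zero count to the winding. To make this rigorous, I would avoid the sign check via $p=0$ and instead use a homotopy in $p\in[0,p_0]$. The coefficients $c_i$ depend continuously on $p$, and $\nabla P_{k,p}\ne0$ on the unit circle throughout by the argument above. Degree is homotopy invariant, so the degree equals its value at $p=0$, which is $1-k$. This homotopy argument also makes the Sturm zero count unnecessary.
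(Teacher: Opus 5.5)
Your final argument is correct and is essentially the paper's proof: the two-term recursion for the $c_i$, the polar ODE $f''+p\cot\theta\,f'+k(k+p)f=0$ with uniqueness ruling out zeros of $\nabla P_{k,p}$ off the axis (plus $f(0)=1$, $f(\pi)=(-1)^k$ on it), and the homotopy in $p$ down to $P_{k,0}=\operatorname{Re}(a+ib)^k$, whose gradient $\overline{kz^{k-1}}$ has degree $1-k$. The discarded Sturm/winding detour has slips you should not keep: in fact $\psi'=k(ff''-f'^2)/(k^2f^2+f'^2)$, which equals $-k$ at zeros of $f$, and $f$ is proportional to $C^{p/2}_k(\cos\theta)$ rather than $C^{(p+1)/2}_k(\cos\theta)$; none of this is needed for your final argument.
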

	
	\begin{proof}
		Every homogeneous polynomial of degree $k$ which is even in $b$ can be written as
		\begin{equation}\label{eq:Pexpansion}
			P(a,b)=\sum_{j=0}^{\lfloor k/2\rfloor}c_j a^{k-2j}b^{2j}.
		\end{equation}
		The coefficient of $a^{k-2j-2}b^{2j}$ in $P_{aa}+P_{bb}+\frac{p}{b}P_b$ is
		\[
		(k-2j)(k-2j-1)c_j+(2j+2)(2j+1+p)c_{j+1}.
		\]
		Since $2j+1+p>0$ for $p\geq0$, setting these coefficients equal to zero determines $c_{j+1}$ from $c_j$ for $0\leq j<\lfloor k/2\rfloor$. Thus $c_0$ determines all the coefficients and the space is one-dimensional. Furthermore, observe that the coefficients depend continuously on $p$.
		
		At this point, it is convenient to write $P_{k,p}(r\cos\theta,r\sin\theta)=r^kF(\theta)$. Then, equation \eqref{eq:weighted} for $P_{k,p}$ turns into a second-order linear ODE for $F$, which is nonsingular for $b\neq0$. If $\nabla P_{k,p}$ vanished somewhere away from the $a$-axis, then the radial and angular derivatives would both vanish, resulting in $F=F'=0$. Uniqueness for the ODE implies that $F$ would vanish on an interval, and $P_{k,p}$ would be zero on an open set, a contradiction. On the $a$-axis, the normalization gives $P_{k,p}(a,0)=a^k$, and hence $(P_{k,p})_a(a,0)=ka^{k-1}$, which non-zero for $a\neq0$.
		
		As the restriction of $\nabla P_{k,p}$ to the unit circle varies continuously with $p\geq0$ through nonvanishing maps, its degree is constant in $p$ and we may compute it at $p=0$. In that case, the equation is $\Delta P=0$ and the polynomial $\operatorname{Re}(a+ib)^k$ is homogeneous of degree $k$, harmonic, even in $b$, and has coefficient $1$ in front of $a^k$. By uniqueness, we must have
		\[
		P_{k,0}(a,b)=\operatorname{Re}(a+ib)^k.
		\]
		Furthermore, identifying $\mathbb R^2 \cong \mathbb C$ via $(v_1,v_2) \mapsto v_1+iv_2$ and writing $a+ib=e^{i\theta}$, we have
		\[
		(P_{k,0})_a+i(P_{k,0})_b=ke^{-i(k-1)\theta},
		\]
		whose degree is $1-k$, proving \eqref{eq:degree-homogeneous}.
	\end{proof}
	
	We can now describe how an arbitrary critical point behaves under a small perturbation of the charge strengths.
	
	\begin{proposition}\label{prop:local-perturbation}
		Let $z_0\in\Omega$ be a critical point of $U$ and $k$ the order of vanishing of $U-U(z_0)$. 
		Then $k\geq2$, $z_0$ is isolated, and there is a closed disk $B\subset\Omega$ centered at $z_0$ such that every sufficiently small perturbation of the charge strengths resulting in a nondegenerate $\widetilde U$, has exactly $k-1$ critical points in $B$, all saddles. In particular, every such perturbation has at least one critical point in $B$, and at least two if $D^2U(z_0)=0$.
	\end{proposition}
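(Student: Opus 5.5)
Expand $U-U(z_0)$ in a Taylor series at $z_0=(a_0,b_0)$ and let $P$ be its first nonzero homogeneous term, of degree $k$. Such a $k$ exists because $U$ is real analytic and nonconstant on the connected set $\Omega$; $U$ is nonconstant since it blows up near a charge with $q_r\neq0$. Since $z_0$ is critical, $k\geq2$. The lowest-order part of \eqref{eq:weighted} determines $P$. If $b_0\neq0$, the term $\frac{p}{b}U_b$ contributes only at order $k-1$ relative to the second derivatives, which are of order $k-2$, so $P$ is harmonic. After a rotation $P=c\,\mathrm{Re}(e^{i\phi}(a+ib)^k)$, and its gradient is $ck\,\overline{e^{i\phi}(a+ib)^{k-1}}$ up to conjugation conventions. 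This has degree $1-k$ on circles and does not vanish away from the origin. If $b_0=0$, then $U$ is even in $b$, hence so is $P$, and $P$ satisfies the weighted equation exactly. The weighted term is homogeneous of the same degree $k-2$ since $\frac{1}{b}\partial_b$ lowers degree by two. By Lemma \ref{lem:homogeneous}, $P=cP_{k,p}$ with $c\neq0$, so again $\nabla P\neq0$ off the origin with degree $1-k$.

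Next I would deduce isolation and the degree of $\nabla U$ at $z_0$. Write $\nabla U(z_0+h)=\nabla P(h)+O(|h|^{k})$ while $|\nabla P(h)|\geq c'|h|^{k-1}$. On a small circle $\partial B$ the error is dominated, so $\nabla U\neq0$ on $\overline B\setminus\{z_0\}$. This gives isolation, and by straight-line homotopy $\degmap(\nabla U/|\nabla U|,\partial B)=1-k$. I choose $B\subset\Omega$ this small. Also $\nabla U-\nabla P=O(|h|^k)$ is used quantitatively below.

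Now perturb the charges: $\widetilde U=U+\sum\delta_r(\cdot)$ with $\nabla\widetilde U\to\nabla U$ uniformly in $C^1$ on $B$. For small perturbations $\nabla\widetilde U$ has no zeros on $\partial B$, and the total degree is still $1-k$. If $\widetilde U$ is nondegenerate, the sum of local degrees $\pm1$ of its critical points in $B$ equals $1-k$. The key remaining claim is that every critical point of $\widetilde U$ in $B$ (for small perturbations) has degree $-1$, i.e.\ is a saddle. Granting this, exactly $k-1$ critical points follow. To prove it I would use the equation. At an off-axis critical point of $\widetilde U$ we have $\widetilde U_b=0$, so $\widetilde U_{aa}+\widetilde U_{bb}=0$ there. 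Hence the Hessian is trace-free, so $\det D^2\widetilde U\le0$, and nondegeneracy forces a saddle. At an on-axis critical point, evenness gives $\widetilde U_{ab}=0$, and $\widetilde U_b/b\to \widetilde U_{bb}$, so the equation reads $\widetilde U_{aa}+(1+p)\widetilde U_{bb}=0$. Since $p>0$ the two diagonal entries have opposite signs (or both vanish), so again $\det<0$ when nondegenerate. The perturbed function satisfies the same equation because it is again a charge potential of the form \eqref{eq:U}. No smallness is actually needed for the saddle claim, only for the degree count.

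Finally, $k-1\geq1$ always, and $D^2U(z_0)=0$ means $k\geq3$, giving at least two saddles. The main obstacle I expect is the careful justification that the leading term on the axis satisfies the weighted equation exactly. One has to check that $U_b/b$ expands consistently, with the homogeneous part of $U_b/b$ of degree $k-2$ equal to $P_b/b$. This follows from evenness, since $U_b/b$ is analytic with Taylor series obtained termwise. One also has to handle the off-axis case, where $b_0\neq0$ so the weighted term is genuinely lower order and $P$ is honestly harmonic.
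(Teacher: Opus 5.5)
Your proposal is correct and follows the paper's proof essentially step for step. You take the leading homogeneous term (harmonic off the axis, a nonzero multiple of $P_{k,p}$ on the axis via Lemma~\ref{lem:homogeneous}), use the bound $|\nabla U(z_0+y)|\gtrsim|y|^{k-1}$ to get isolation and local degree $1-k$, keep that degree under small changes of the charges, and use the traceless and on-axis Hessian relations to force every nondegenerate critical point of $\widetilde U$ to be a saddle. Your extra remarks, namely connectedness of $\Omega$ to guarantee a finite order of vanishing and the termwise expansion of $U_b/b$ on the axis, make explicit points the paper leaves implicit.
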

	
	\begin{proof}
		Since $U$ is real analytic in $\Omega$ and nonconstant, its Taylor expansion at $z_0$ has a first nonconstant homogeneous term $P_k$, with $k\geq2$:
		\begin{equation}\label{eq:Taylor}
			U(z)=U(z_0)+P_k(z-z_0)+O(|z-z_0|^{k+1}).
		\end{equation}
		
		If $z_0$ is off the axis, the terms of degree $k-2$ in \eqref{eq:weighted} give $\Delta P_k=0$. Thus, $P_k$ is a linear combination of $\operatorname{Re}((a+ib)^k)$ and $\operatorname{Im}((a+ib)^k)$, so its gradient is nonzero away from the origin and has degree $1-k$. If $z_0$ lies on the axis, then $P_k$ is even in $b$ and satisfies $(P_k)_{aa}+(P_k)_{bb}+\frac{p}{b}(P_k)_b=0$, so we can use
		Lemma \ref{lem:homogeneous} to get to the same conclusion.
		
		Differentiating \eqref{eq:Taylor} gives
		\[
		\nabla U(z_0+y)=\nabla P_k(y)+O(|y|^k).
		\]
		Since $\nabla P_k$ is non-vanishing and homogeneous of degree $k-1$, $|\nabla U(z_0+y)|\gtrsim |y|^{k-1}$ for sufficiently small $|y|$ and so it has no zero in a sufficiently small ball around $z_0$. In particular, we can homotope $\nabla P_k$ to $\nabla U$ through never vanishing vector fields in a small circle around $z_0$. Hence, $z_0$ is the only zero in a small ball $B$ around $z_0$ and, using again Lemma \ref{lem:homogeneous},
		\[
		\degmap\left(\frac{\nabla U}{|\nabla U|}: \partial B \to S^1 \right)=1-k.
		\]
		By continuous dependence on the charge strengths, for $(\tilde q_1,\ldots, \tilde q_m)$ sufficiently close to $(q_1,\ldots,q_m)$ the gradient of the perturbed potential $\widetilde U$ remains never vanishing on $\partial B$, and the local degree is preserved.
		
		Furthermore, assuming all critical points of $\widetilde U$ in $B$ to be nondegenerate. Every such critical point is a saddle. Indeed, off the axis \eqref{eq:weighted} makes the Hessian traceless, while on the axis evenness and the fact that $\widetilde U_b/b$ extends there with value $\widetilde U_{bb}$ give
		\[
		\widetilde U_{ab}=0,
		\qquad
		\widetilde U_{aa}+(p+1)\widetilde U_{bb}=0.
		\]
		Thus every critical point of $\widetilde U$ in $B$ has local degree $-1$. Homotopy invariant and additivity of the degree gives
		\[
		1-k=-\#\{\text{critical points of }\widetilde U\text{ in }B\},
		\]
		which proves the asserted count. Finally, if $D^2U(z_0)=0$, the quadratic term in the Taylor expansion vanishes, so $k\geq3$.
	\end{proof}
	
	\section{Perturbation to the Morse case}\label{sec:morse}
	
	In this section we first show that the charge strengths can always be perturbed so that all critical points are nondegenerate and the total charge is nonzero. Then, in Proposition \ref{prop:morse-count} we use degree theory to count the critical points of such a potential and Proposition \ref{prop:local-perturbation}. 
	
	To make the dependence on $q=(q_1,\ldots,q_m)\in\bbR^m$ explicit, in this section we shall write
	\begin{equation}\label{eq:Uq}
		U_q(a,b)=\sum_{r=1}^m \frac{q_r}{\big((a-x_r)^2+b^2\big)^{p/2}}.
	\end{equation}
	
	\begin{lemma}\label{lem:generic-morse}
		Let $p>0$, $x_1,\ldots,x_m$ all distinct, and $q=(q_1,\ldots,q_m)$ with every $q_r\neq0$. Then, in any neighborhood $\mathcal O \subset \bbR^m$ of $q$, there is $\widetilde q\in\mathcal O$ such that every $\widetilde q_r \neq 0$, $\sum_r\widetilde q_r\neq0$, and all critical points of $U_{\widetilde q}$ are nondegenerate.
	\end{lemma}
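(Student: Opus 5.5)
The plan is to use parametric transversality (Sard's theorem), exploiting that $U_q$ depends linearly on $q$. Write $\psi_r(a,b)=\big((a-x_r)^2+b^2\big)^{-p/2}$, so that $U_q=\sum_r q_r\psi_r$. If $m=1$, then $\nabla U_q$ never vanishes on $\Omega$, so there are no critical points. In that case it suffices to keep $q_1\neq0$, which also gives $\sum_r q_r\neq0$. So assume $m\geq2$. The conditions $\widetilde q_r\neq0$ and $\sum_r\widetilde q_r\neq0$ fail only on finitely many hyperplanes, a set of measure zero. It is therefore enough to show that for Lebesgue-almost every $\widetilde q\in\bbR^m$ all critical points of $U_{\widetilde q}$ are nondegenerate. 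Then $\mathcal O$ minus these null sets is nonempty, and any $\widetilde q$ in it works.

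\emph{Off the axis.} Consider $F:\{b\neq0\}\times\bbR^m\to\bbR^2$, $F(z,q)=\nabla U_q(z)$. Its derivative in the $q_r$-direction is $\nabla\psi_r(z)=-p\,\big((a-x_r)^2+b^2\big)^{-(p+2)/2}(a-x_r,b)$. For two distinct indices $r\neq s$, the vectors $(a-x_r,b)$ and $(a-x_s,b)$ have determinant $b(x_s-x_r)\neq0$. Hence $D_qF$ is already onto $\bbR^2$, and $F$ is a submersion, so $0$ is a regular value of $F$. By the parametric transversality theorem (Sard applied to the projection $F^{-1}(0)\to\bbR^m$; the domain is second countable, so noncompactness causes no trouble), for almost every $q$ the map $z\mapsto\nabla U_q(z)$ has $0$ as a regular value on $\{b\neq0\}$. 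This means every off-axis critical point has invertible Hessian.

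\emph{On the axis.} This is the main obstacle: there all $\nabla\psi_r$ are horizontal, so $F$ is not a submersion and the argument above breaks down. Instead I would use the structure already noted in the proof of Proposition \ref{prop:local-perturbation}. By evenness in $b$, we have $U_b=U_{ab}=0$ on the axis, and $U_{aa}+(p+1)U_{bb}=0$. So a point $(a,0)$ is critical iff $g_q'(a)=0$, where $g_q(a)=U_q(a,0)=\sum_r q_r|a-x_r|^{-p}$. Moreover it is nondegenerate iff $g_q''(a)=U_{aa}\neq0$, since then $U_{bb}=-U_{aa}/(p+1)\neq0$ and the Hessian is diagonal with nonzero entries. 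Now consider $G:(\bbR\setminus\{x_r\})\times\bbR^m\to\bbR$, $G(a,q)=g_q'(a)$. Its $q_r$-derivative is $-p\,\operatorname{sign}(a-x_r)|a-x_r|^{-p-1}\neq0$, so $G$ is a submersion. The same parametric transversality argument then shows that for almost every $q$, $0$ is a regular value of $g_q'$, that is, $g_q''\neq0$ at every zero of $g_q'$.

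Intersecting the two full-measure sets with the complement of the hyperplanes $\{q_r=0\}$ and $\{\sum_r q_r=0\}$ gives a full-measure set. Any $\widetilde q$ in it that lies in $\mathcal O$ has all the required properties.
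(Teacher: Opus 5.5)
Your proposal is correct and follows essentially the same route as the paper. Off the axis you use Sard's theorem, via the projection of the zero set to charge space and the rank-two computation of $D_q\nabla U_q$ for $b\neq0$. On the axis you apply it to the one-variable function $a\mapsto (U_q)_a(a,0)$, and the relation $U_{aa}+(p+1)U_{bb}=0$ upgrades $U_{aa}\neq0$ to a nonsingular Hessian. You then discard the hyperplanes $q_r=0$ and $\sum_r q_r=0$, exactly as the paper does.
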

	
	\begin{proof}
		When $m=1$ the statement is immediate, so suppose that $m\geq2$. We first show that
		\[
		\mathcal Z=\{(q,a,b)\in\bbR^m\times\bbR \times (\bbR\backslash \{0\}):\nabla U_q(a,b)=0\}
		\]
		is a smooth $m$-dimensional manifold away from $\{b=0\}$. In order to show that using the implicit function theorem we need only show that $D_q(\nabla U_q)$ has rank two whenever $b \neq 0$. To do that notice that
		\[
		\frac{\partial}{\partial q_r } \left(\nabla U_q\right)=p\frac{(x_r-a,-b)}{\big((a-x_r)^2+b^2\big)^{(p+2)/2}},
		\]
		and so, for $r \neq s$, the determinant of $\partial_{q_r}(\nabla U_q)$ and $\partial_{q_s}(\nabla U_q)$ is
		\[
		p^2b\,
		\frac{x_s-x_r}
		{\big((a-x_r)^2+b^2\big)^{(p+2)/2}
			\big((a-x_s)^2+b^2\big)^{(p+2)/2}} \neq 0,
		\]
		for $b \neq 0$, as claimed.
		
		Let $\pi:\mathcal Z\to\bbR^m$ be projection onto the charge strengths. At a point $(q,a,b)\in\mathcal Z$, with $b \neq 0$, a tangent vector $(v,w)\in\bbR^m\times\bbR^2$ satisfies
		\[
		D_q(\nabla U_q)v+D^2U_q\,w=0.
		\]
		Since $D_q(\nabla U_q)$ is onto $\bbR^2$, the differential of $\pi$, $D\pi(v,w)=v$, is onto if and only if $D^2U_q(a,b)$ is invertible. Sard's theorem \cite[Chapter~3]{Hirsch} therefore implies that, outside a set of measure zero in $\bbR^m$, every critical point of $U_q$ with $b \neq 0$ is nondegenerate. 
		
		Since $U_q$ is even in $b$, its critical points on the axis are exactly the zeroes of $a\mapsto(U_q)_a(a,0)$. For the axis, let
		\[
		\mathcal Z_0=\{(q,a)\in\bbR^m\times(\bbR\backslash\{x_1,\ldots,x_m\}):(U_q)_a(a,0)=0\}.
		\]
		For every $r$,
		\[
		\frac{\partial}{\partial q_r}(U_q)_a(a,0)
		=p(x_r-a)|x_r-a|^{-p-2}\neq0.
		\]
		and, again by the implicit function theorem, $\mathcal Z_0$ is also a smooth $m$-dimensional manifold. If $\pi_0:\mathcal Z_0\to\bbR^m$ denotes projection onto the charge strengths, the same tangent-space argument shows that $q$ is a regular value of $\pi_0$ if and only if $(U_q)_{aa}(a,0)\neq0$ at every axis critical point. Sard's theorem again shows that this holds outside a set of measure zero.
		
		At such an axis critical point, evenness gives $(U_q)_{ab}=0$. Since $(U_q)_b/b$ extends across the axis with value $(U_q)_{bb}$, equation \eqref{eq:weighted} gives
		\begin{equation}\label{eq:axis-Hessian-relation}
			(U_q)_{aa}+(p+1)(U_q)_{bb}=0.
		\end{equation}
		Since $(U_q)_{aa}\neq0$, the Hessian is therefore nonsingular.
		
		The union of the two exceptional sets given by Sard's theorem and the hyperplanes $q_r=0$ and $\sum_rq_r=0$ has measure zero in $\bbR^m$. Every neighborhood $\mathcal O$ therefore contains a vector $\widetilde q$ outside this union, and such a vector satisfies the conclusion.
	\end{proof}
	
	The following count is the two-dimensional version of the argument used by Maxwell in his discussion in \cite[Chapter VI]{Maxwell}. Maxwell's argument is, in spirit, an early form of Morse theory showing that the absence of local maxima and minima constrains the number of critical points via the topology of the level sets. See the discussion in the Appendix of \cite{GNS} for more details.
	
	\begin{proposition}\label{prop:morse-count}
		Let $p>0$, locations $x_1 , \ldots , x_m \in \mathbb{R}$ be pairwise distinct, and $q_1, \ldots , q_m$ all nonzero, and satisfying $Q:=\sum_{r=1}^m q_r\neq0$. Further suppose that all critical points of the potential 
		\[
		U(a,b)=\sum_{r=1}^m \frac{ q_r}{\big((a-x_r)^2+b^2\big)^{p/2}},
		\]
		are nondegenerate. Then, $U$ has exactly $m-1$ critical points, all of which are saddles.
	\end{proposition}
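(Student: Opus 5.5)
We will prove the statement by a global degree count for $\nabla U$ on a large disk with small disks around the charges removed. First, every critical point is a saddle. Off the axis, \eqref{eq:weighted} makes the Hessian traceless. On the axis, $U_{ab}=0$ and \eqref{eq:axis-Hessian-relation} holds. So a nondegenerate critical point has $\det D^2U<0$ and local degree $-1$. Nondegenerate critical points are isolated, and we will show below that they lie in a compact set, so they are finite in number. If there are $c$ of them, their local degrees sum to $-c$. It therefore suffices to show that the total degree of $\nabla U/|\nabla U|$ on the boundary of
\[
\Omega_{R,\varepsilon}=B_R(0)\backslash\bigcup_{r}B_\varepsilon\big((x_r,0)\big)
\]
equals $1-m$, for $R$ large and $\varepsilon$ small.

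\textbf{Small circles.} Near $(x_r,0)$ we have
\[
\nabla U=-p\,q_r\,\frac{z-(x_r,0)}{|z-(x_r,0)|^{p+2}}+O(1),
\]
and the singular term dominates as $\varepsilon\to0$. Since $q_r\neq0$, the field is a nonzero multiple of the radial field (inward if $q_r>0$, outward if $q_r<0$). Either way it has degree $+1$ around the positively oriented circle, and no critical points lie in these small disks.

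\textbf{Large circle.} Expand $U$ at infinity: $U=Q|z|^{-p}+O(|z|^{-p-1})$, with gradient $\nabla U=-pQ\,z|z|^{-p-2}+O(|z|^{-p-2})$. Since $Q\neq0$, the leading term dominates for $|z|$ large, so $\nabla U$ is again radial and has degree $+1$ on $\partial B_R$. This is where the hypothesis $Q\neq0$ is used, and it also shows there are no critical points outside some compact set.

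\textbf{Count.} The boundary of $\Omega_{R,\varepsilon}$ consists of $\partial B_R$ positively oriented and the $m$ small circles negatively oriented. By additivity of the degree,
\[
-c=\sum_{z_0}\degmap=1-m,
\]
so $c=m-1$.

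The main obstacle is the step at infinity. We must justify uniformly that the $O(|z|^{-p-2})$ correction is dominated by the leading term. This follows by expanding $\big((a-x_r)^2+b^2\big)^{-p/2}=|z|^{-p}\big(1+O(|z|^{-1})\big)$ with matching derivative bounds. Each term's gradient is $-p\,(z-(x_r,0))|z-(x_r,0)|^{-p-2}$, which differs from $-p\,z|z|^{-p-2}$ by $O(|z|^{-p-2})$. Summing gives the claim, with no assumption on the signs of the $q_r$.
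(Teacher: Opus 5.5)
Your proposal is correct and follows essentially the same route as the paper. You show that every critical point is a saddle of local degree $-1$ via the traceless Hessian off the axis and the relation $U_{aa}+(p+1)U_{bb}=0$ on it, and then compute the degree of $\nabla U/|\nabla U|$ on the boundary of a large disk with small disks around the charges removed, obtaining $1-m=-c$. You also make explicit two points the paper leaves implicit: the finiteness of the critical set (isolated and confined to a compact set) and the uniform $O(|z|^{-p-2})$ error estimate at infinity.
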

	
	\begin{proof}
		At an off-axis critical point, \eqref{eq:weighted} gives $U_{aa}+U_{bb}=0$, while an axis critical point, evenness and \eqref{eq:axis-Hessian-relation} give $U_{ab}=0$ and $U_{aa}+(p+1)U_{bb}=0$. Since the Hessian is nonsingular by assumption, in both cases it is indefinite and all critical points are saddles.
		
		The rest of the proof is a variation of the classical Morse--Cairns argument for electrostatic potentials in $\bbR^3$, see \cite[Chapter~32]{MorseCairns}. As $Q\neq0$, for sufficiently large $\rho=\sqrt{a^2+b^2} \gg 1$
		\begin{equation}\label{eq:monopole-asymptotics}
			U=Q\rho^{-p}+O(\rho^{-p-1}),
			\qquad
			\nabla U=-pQ\rho^{-p-1}\partial_\rho+O(\rho^{-p-2}),
		\end{equation}
		while near the $r$-th charge, i.e. for $\rho_r=\sqrt{(a-x_r)^2+b^2} \ll 1$,
		\begin{equation}\label{eq:charge-local-morse}
			U=q_r\rho_r^{-p}+O(1),
			\qquad
			\nabla U=-pq_r\rho_r^{-p-1}\partial_{\rho_r}+O(1).
		\end{equation}
		In particular, it follows that there are neighborhoods of infinity, $\mathbb R^2 \backslash B_R(0)$ and of the charges $B_\varepsilon(x_r,0)$, which contain no critical points. Thus all critical points lie in the compact set
		\[
		K=\overline{B_R(0)}\backslash\bigcup_{r=1}^m B_\varepsilon(x_r,0)
		\]
		contains all the critical points of $U$ and $\nabla U$ does not vanish on $\partial K$. The asymptotics for $\nabla U$ in \eqref{eq:monopole-asymptotics} give that, depending on the sign of $Q$, the normalized gradient on the outer boundary is homotopic to $\pm \partial_\rho$, both of which have degree $1$ with respect to the induced boundary orientation.
		
		Similarly, it follows from \eqref{eq:charge-local-morse} that on a sufficiently small circle around the $r$-th charge the normalized gradient also has degree $1$ on that circle with its counterclockwise orientation. However, as a boundary component of $K$, it carries the opposite orientation, and so has degree $-1$. From additivity of the degree we have
		\[
		\degmap\left(
		\frac{\nabla U}{|\nabla U|}:\partial K\longrightarrow S^1
		\right)=1-m,
		\]
		as the inner boundary has $m$ connected components.
		
		By homotopy invariance and additivity of the degree, this must be the sum of the local degrees of the critical points of $U$ in $K$. Each has local degree $-1$, thus $\# \{ \text{critical points}\} =m-1$.

%
	\end{proof}
	
	\section{The evaluation matrix}\label{sec:evaluation}
	
	We now return to the evaluation matrix introduced in the proof sketch. For pairwise distinct pairs $(a_j,b_j)\in\bbR\times[0,+\infty)$ set
	\begin{equation}\label{eq:Rj-evaluation}
		R_j(x)=(x-a_j)^2+b_j^2.
	\end{equation}
	When $b_j=0$, the corresponding functions are understood only away from $x=a_j$.
	
	\begin{proposition}\label{prop:evaluation}
		Let $p>0$, $\lbrace (a_j,b_j)\rbrace_{j=1,\ldots,N} \subset  \bbR \times [0,+\infty) $ be pairwise distinct, and $x_1,\ldots,x_{2N}\in\bbR$ also distinct. Assume that $x_r\neq a_j$ whenever $b_j=0$. Then
		\begin{equation}\label{eq:evaluation-det}
			\det\left[
			R_j(x_r)^{-(p+2)/2}\ \ \ (x_r-a_j)R_j(x_r)^{-(p+2)/2}
			\right]_{\substack{r=1,\ldots,2N\\ j=1,\ldots,N}}
			\neq0,
		\end{equation}
		where the columns are ordered in the indicated consecutive pairs.
	\end{proposition}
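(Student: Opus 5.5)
We argue by contradiction, following the sketch in the introduction. Suppose the determinant in \eqref{eq:evaluation-det} vanishes. Then the evaluation matrix $C$ is singular, and so is $C^T$. Choose a nonzero $q=(q_1,\ldots,q_{2N})\in\ker C^T$. Let $m\leq 2N$ be the number of nonzero entries of $q$. Discarding the zero entries and relabelling, we form the auxiliary potential $U=U_q$ of \eqref{eq:U} with charges $q_r$ at the distinct collinear points $(x_r,0)$. Since $q\neq0$, we have $m\geq1$, and $U$ is nonconstant.

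\textbf{Step 1: read off the equations $C^Tq=0$.} Differentiating \eqref{eq:U} at $(a,b)$ gives
\[
U_a(a,b)=p\sum_r q_r(x_r-a)\big((a-x_r)^2+b^2\big)^{-(p+2)/2},
\qquad
U_b(a,b)=-p\,b\sum_r q_r\big((a-x_r)^2+b^2\big)^{-(p+2)/2}.
\]
At $(a,b)=(a_j,\pm b_j)$ we have $(a_j-x_r)^2+b_j^2=R_j(x_r)$. The second column of the $j$-th pair gives $U_a(a_j,\pm b_j)=0$. If $b_j>0$, the first column gives $U_b(a_j,\pm b_j)=0$, so both $(a_j,b_j)$ and $(a_j,-b_j)$ are critical points of $U$ in $\Omega$. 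These points lie in $\Omega$ because $b_j\neq0$.

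If $b_j=0$, then $(a_j,0)\in\Omega$ by the hypothesis $x_r\neq a_j$, and $U_b=0$ there by evenness, so $(a_j,0)$ is again critical. In this case the first column says $\sum_r q_r|x_r-a_j|^{-p-2}=0$, which is exactly $\lim_{b\to0}U_b/b=U_{bb}(a_j,0)=0$. Relation \eqref{eq:axis-Hessian-relation} then gives $U_{aa}=0$, and $U_{ab}=0$ by evenness. Hence $D^2U(a_j,0)=0$.

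Since the pairs $(a_j,b_j)$ are pairwise distinct, the points so obtained are pairwise distinct. This gives $2$ prescribed critical points for each $j$ with $b_j>0$, and $1$ degenerate critical point for each $j$ with $b_j=0$.

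\textbf{Step 2: perturb and count.} By Proposition \ref{prop:local-perturbation}, each prescribed critical point $z_0$ has a closed disk $B_{z_0}$ around it. We shrink these disks so that they are pairwise disjoint, which is possible because the points are finite in number and isolated. Every sufficiently small perturbation $\widetilde q$ for which $U_{\widetilde q}$ is nondegenerate then has:
\begin{itemize}
\item at least $1$ critical point in $B_{z_0}$ for each off-axis point $z_0$;
\item at least $2$ critical points in $B_{(a_j,0)}$ when $b_j=0$, since there $D^2U=0$.
\end{itemize}
Lemma \ref{lem:generic-morse} supplies such a $\widetilde q$, arbitrarily close to $q$, with all $\widetilde q_r\neq0$ and $\sum_r\widetilde q_r\neq0$. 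The smallness required by Proposition \ref{prop:local-perturbation} concerns only finitely many disks, so it is a finite intersection of neighbourhoods. Summing over the disjoint disks, $U_{\widetilde q}$ has at least $2N$ critical points.

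On the other hand, Proposition \ref{prop:morse-count} says that $U_{\widetilde q}$ has exactly $m-1\leq 2N-1$ critical points. This contradiction proves \eqref{eq:evaluation-det}.

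\textbf{Main obstacle.} The delicate point is the $b_j=0$ case of Step 1: identifying the first column with $U_{bb}(a_j,0)$ and using the weighted equation to conclude that the whole Hessian vanishes. This requires the analytic extension of $U_b/b$ across the axis. Everything else is bookkeeping with the earlier results; the one thing to check there is that each disk stays inside $\Omega$, away from the charges.
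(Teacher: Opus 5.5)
Your proof is correct and follows the paper's argument essentially step for step. You take a kernel vector of $C^T$ to build the collinear potential, read off two critical points per profile with $b_j>0$ and one critical point with vanishing Hessian per profile with $b_j=0$, and then combine Proposition \ref{prop:local-perturbation}, Lemma \ref{lem:generic-morse} and Proposition \ref{prop:morse-count} to reach $2N\leq m-1$. The only minor difference is that you get $U_{aa}(a_j,0)=0$ from the axis relation $U_{aa}+(p+1)U_{bb}=0$, whereas the paper computes $U_{aa}$ and $U_{bb}$ at $(a_j,0)$ directly.
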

	
	\begin{proof}
		Suppose otherwise, and let $C$ denote the matrix in \eqref{eq:evaluation-det}. Since $C$ is singular, choose a nonzero vector $q=(q_1,\ldots,q_{2N})\in\ker C^T$. Delete the zero entries of $q$, and write $I=\{r:q_r\neq0\}$, and $m=|I|\leq2N$. Then, the collinear potential is simply
		\begin{equation}\label{eq:Uaux}
			U(a,b)=\sum_{r\in I}\frac{q_r}{\big((a-x_r)^2+b^2\big)^{p/2}}.
		\end{equation}

		For each $j$, the two equations which express that $q$ annihilates the $j$-th pair of columns are
		\begin{align}
			\sum_{r\in I}q_rR_j(x_r)^{-(p+2)/2}&=0,
			\label{eq:null-first}\\
			\sum_{r\in I}q_r(x_r-a_j)R_j(x_r)^{-(p+2)/2}&=0.
			\label{eq:null-second}
		\end{align}
		On the other hand,
		\begin{align}
			U_a(a,b)
			&=p\sum_{r\in I}q_r(x_r-a)
			\big((a-x_r)^2+b^2\big)^{-(p+2)/2},
			\label{eq:Ua}\\
			U_b(a,b)
			&=-pb\sum_{r\in I}q_r
			\big((a-x_r)^2+b^2\big)^{-(p+2)/2}.
			\label{eq:Ub}
		\end{align}
		
		If $b_j>0$, equations \eqref{eq:null-first}--\eqref{eq:null-second} and \eqref{eq:Ua}--\eqref{eq:Ub} give $\nabla U(a_j,b_j)=0$ and by evenness, $(a_j,-b_j)$ is also a critical point. If instead $b_j=0$, as none of the $x_r$ is equal to $a_j$, so $(a_j,0)$ is not a singularity of $U$. Equation \eqref{eq:null-second} gives $U_a(a_j,0)=0$, while $U_b(a_j,0)=0$ by evenness in $b$. Thus $(a_j,0)$ is a critical point. In this case equation \eqref{eq:null-first} forces more. Directly differentiating at $b=0$ gives $U_{ab}(a_j,0)=0$ and
		\begin{align*}
			U_{bb}(a_j,0)
			&=-p\sum_{r\in I}q_r|x_r-a_j|^{-p-2},\\
			U_{aa}(a_j,0)
			&=p(p+1)\sum_{r\in I}q_r|x_r-a_j|^{-p-2}.
		\end{align*}
		The common sum in the rights hand sides vanishes by \eqref{eq:null-first} and consequently $
		D^2U(a_j,0)=0$, i.e. it is a degenerate critical point.
		
		Choose $m$ pairwise disjoint balls around each of the points $(a_j,\pm b_j)$ (when $|b_j| \neq 0$) and $(a_j,0)$ (when $b_j=0$). If sufficiently small Proposition \ref{prop:local-perturbation} applies in all those balls, and, by Lemma \ref{lem:generic-morse}, we may perturb the $m$ nonzero charge strengths so that they remain nonzero, their total charge is nonzero, and every critical point of the perturbed potential $\widetilde U$ is nondegenerate. 
		
		Each profile with $|b_j| \neq 0$ gives two balls, each around $(a_j,\pm b_j)$, and containing at least one critical point of $\widetilde U$. Each profile with $b_j=0$ gives one ball centered at the degenerate critical point $(a_j,0)$, where $D^2U(a_j,0)=0$, so Proposition \ref{prop:local-perturbation} gives at least two critical points of $\widetilde U$ in that ball. Hence $\widetilde U$ has at least $2N$ critical points. On the other hand, Proposition \ref{prop:morse-count} gives exactly $m-1$ such critical points. Since $m\leq2N$,
		\[
		2N\leq m-1\leq2N-1,
		\]
		a contradiction. This proves \eqref{eq:evaluation-det}.
	\end{proof}
	
	\begin{proof}[Proof of Theorem \ref{thm:Haar}]
		Suppose that a nonzero linear combination of the functions in \eqref{eq:Haar-family} vanished at $2N$ distinct points $x_1,\ldots,x_{2N}\in D$. If $C$ denotes the corresponding evaluation matrix in Proposition \ref{prop:evaluation}, the coefficients of this linear combination give a nonzero vector in $\ker C$, contradicting the nonsingularity of $C$. Hence there can be at most $2N-1$ distinct zeroes in $D$.
	\end{proof}


\begin{thebibliography}{99}
		
		\bibitem{ABK}
		Philip Arathoon, Gavin Ball, and Matthew D. Kvalheim,
		\textit{The Maxwell Conjecture is False},
		arXiv:2607.27197 (2026).
		
		\bibitem{Duan}
		Xiuqing Duan,
		\textit{Critical Points of Line Restrictions of Signed Point-Charge Potentials},
		arXiv:2608.28376 (2026).
		
		\bibitem{EFO}
		Herbert Edelsbrunner, Christopher D. Fillmore, and Gon\c{c}alo Oliveira,
		\textit{Counting equilibria of the electrostatic potential},
		Proc. London Math. Soc. \textbf{132} (2026), no.~5, e70163.
		
		\bibitem{GNS}
		Andrei Gabrielov, Dmitry Novikov, and Boris Shapiro,
		\textit{Mystery of point charges},
		Proc. London Math. Soc. (3) \textbf{95} (2007), no.~2, 443--472.
		
		\bibitem{Hirsch}
		Morris W. Hirsch,
		\textit{Differential Topology},
		Graduate Texts in Mathematics, Vol.~33,
		Springer-Verlag, New York, 1976.
		
		
		\bibitem{LeidenDeclaration}
		\textit{Leiden Declaration on Artificial Intelligence and Mathematics},
		Zenodo (2026), doi:10.5281/zenodo.20302944.
		
		\bibitem{Maxwell}
		James Clerk Maxwell,
		\textit{A Treatise on Electricity and Magnetism}, Vol.~I,
		republication of the third revised edition, Dover Publications, New York, 1954.
		
		\bibitem{MorseCairns}
		Marston Morse and Stewart S. Cairns,
		\textit{Critical Point Theory in Global Analysis and Differential Topology},
		Pure and Applied Mathematics, Vol.~33, Academic Press, New York--London, 1969.
		
		\bibitem{Milnor}
		John W. Milnor,
		\textit{Topology from the Differentiable Viewpoint},
		Princeton Landmarks in Mathematics, Princeton University Press, Princeton, NJ, 1997.
		
		\bibitem{Weinstein}
		Alexander Weinstein,
		\textit{Generalized axially symmetric potential theory},
		Bull. Amer. Math. Soc. \textbf{59} (1953), 20--38.
		
	\end{thebibliography}
\end{document}